 \newtheorem{thm}{Theorem}[section]
 \newtheorem{prop}[thm]{Proposition}
 \theoremstyle{definition}
 \theoremstyle{remark}
 \newtheorem*{ex}{Example}
 \numberwithin{equation}{section}
\begin{document}

%
%
%
%
%
%
%
%
%

\title[Generalized M\"{o}bius Ladder and Its Metric Dimension]
 {\begin{center}Generalized M\"{o}bius Ladder and Its Metric Dimension \end{center}}

\author{Ma Hongbin}
\address{School of Automation, Beijing Institute of Technology, Beijing-China}
\email{mathmhb@qq.com}
\author{Muhammad Idrees}
\address{School of Automation, Beijing Institute of Technology, Beijing-China}
\email{idrees@bit.edu.cn}
\author{Abdul Rauf Nizami}
\address{Abdus Salam School of Mathematical Sciences, GC University, Lahore-Pakistan}
\email{arnizami@sms.edu.pk}

\author{Mobeen Munir}
\address{Division of Science and Technology, University of Education, Lahore-Pakistan}
\email{mmunir@ue.edu.pk}
\maketitle

\begin{abstract}
In this paper we introduce generalized M\"{o}bius ladder $M_{m,n}$ and give its metric dimension. Moreover, it is observed that, depending on even and odd values of $m$ and $n$, it has two subfamilies with constant metric dimensions.
\end{abstract}
\keywords{\textbf{Keywords}. Metric dimension, Resolving set, Generalized M\"{o}bius ladder}
\subjclass{\textbf{Subject Classification (2010)}.  05C12, 05C15, 05C78}

\pagestyle{myheadings}
\markboth{\centerline {\scriptsize
Hongbin, Idrees, Nizami, and Mobeen}} {\centerline {\scriptsize
 Generalized M\"{o}bius Ladder and Its Metric Dimension }}
\section{Introduction}\label{sec1}
The concepts of metric dimension and resolving set were introduced by Slater in \cite{slater:75, slater:98} and studied independently by Harary and Melter in \cite{harary-melter:76}. Since then the resolving sets have been widely investigated, as you can see in \cite{caceres:07, caceres:05, javaid:08, poisson:02, tomescu:07, tomescu-imran:09}.\\

\noindent Applications of metric dimension to the navigation of robots in networks are discussed in \cite{khuller-ragh:94}, to chemistry in \cite{chartrand:00}, and to image processing in \cite{melter-tomescu:84}.\\

\noindent A \emph{graph} $G$ is a pair $(V(G),E(G))$, where $V$ is the set of vertices and $E$ is the set of edges. A \emph{path} from a vertex $v$ to a vertex $w$ is a sequence of vertices and edges that starts from $v$ and stops at $w$. The number of edges in a path is the \emph{length} of that path. A graph is said to be \emph{connected} if there is a path between any two of its vertices. The \emph{distance} $d(u,v)$ between two vertices $u,v$ of a connected graph $G$ is the length of a shortest path between them.
\begin{center}
\begin{minipage}{5cm}
\centering  \epsfig{figure=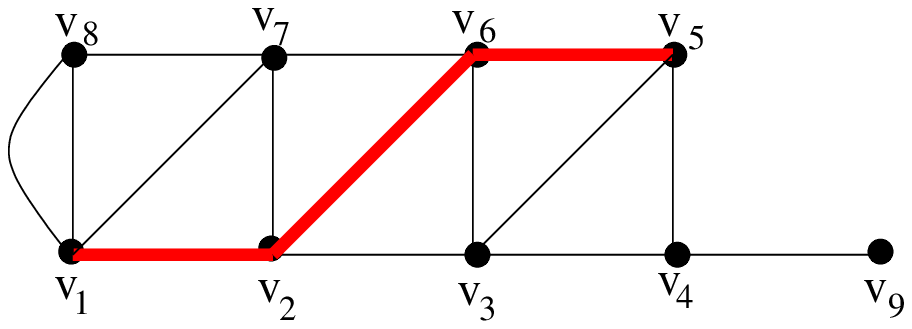,
height=2.0cm}
\par \tiny{A connected graph with a highlighted shortest path from $v_{1}$ to $v_{5}$}
\end{minipage}
\end{center}

\noindent Let $W=\{ w_{1},w_{2},\ldots,w_{k}\}$ be an ordered set of vertices of $G$ and let $v$ be a vertex of $G$. The representation $r(v|_{W})$ of $v$ with respect to $W$ is the $k$-tuple $(d(v,w_{1}),d(v,w_{2}),\ldots,d(v,w_{k}))$. If distinct vertices of $G$ have distinct representations with respect to $W$, then $W$ is called a \emph{resolving set} for $G$ [1]. A resolving set of minimum cardinality is called  a \emph{basis} of $G$; the number of elements in this basis is the \emph{metric dimension} of $G$, $\dim(G)$. A family $\mathcal{G}$ of connected graphs is said to have constant metric dimension if it is independent of any choice of member of that family.\\

\noindent The metric dimension of wheel $W_{n}$ is determined by Buczkowski et al. \cite{buczkowski:03}, of fan $f_{n}$ by Caceres et al. \cite{caceres:05, caceres:07}, and of Jahangir graph $J_{2n}$ by Tomescu et al. \cite{tomescu:07}. Chartrand et al. \cite{chartrand:00} proved that the family of path $P_{n}$ has the constant metric dimension 1. Javaid et al. proved in \cite{javaid:08} that the plane graph antiprism $A_{n}, n\geq5$ constitutes a family of regular graphs with constant metric dimension 3. The metric dimensions of some classes of plane graphs and convex polytopes have been studies in \cite{imran:3}, of generalized Petersen graphs $P(n,3)$ in \cite{imran:1}, of some rotationally-symmetric graphs in \cite{imran:2}. The part of the metric dimension of the M\"{o}bius ladder $M_{n}$ is determined in \cite{murtaza:12}, while the remaining part is determined in \cite{Mobeen-nizami:17}. \\

\noindent This paper is organized as follows: The generalized M\"{o}bius Ladder is introduced in Section~\ref{sec2}, its metric dimension is given in Section~\ref{sec3}, and the examples are given in Section~\ref{sec4}. The conclusive remarks are given in the last section.
\section{Generalized M\"{o}bius Ladder}\label{sec2}
Consider the Cartesian product $P_{m}\times P_{n}$ of paths $P_{m}$ and $P_{n}$ with vertices $u_{1},u_{2},\ldots,u_{m}$ and $v_{1},v_{2},\ldots,u_{n}$, respectively. Take a $180^{o}$ twist and identify the vertices $(u_{1},v_{1}),(u_{1},v_{2}),\ldots,(u_{1},v_{n})$ with the vertices $(u_{m},v_{n})$, $(u_{m},v_{n-1})$, $\ldots,(u_{m},v_{1})$, respectively, and identify the edge $\big((u_{1},i)$, $(u_{1},i+1)\big)$ with the edge $\big((u_{m},v_{n+1-i})$, $(u_{m},v_{n-i})\big)$, where $1\leq i\leq n-1$. What we receive is the generalized M\"{o}bius ladder $M_{m,n}$. You may observe that we receive the usual M\"{o}bius ladder for $n=2$ and for any odd integer $m\geq 4$. You can see $M_{7,3}$ in the following figure.
 \begin{center}
\begin{minipage}{5cm}
\centering  \epsfig{figure=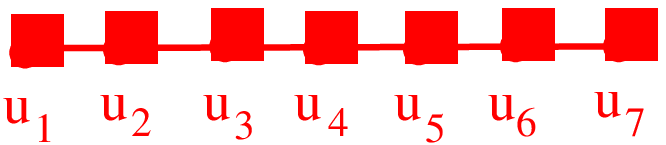, height=.6cm}
\par $P_{7}$
\end{minipage}
\begin{minipage}{5cm}
\centering  \epsfig{figure=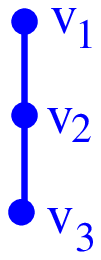, height=2.0cm}
\par $P_{3}$
\end{minipage}
\end{center}
\begin{center}
\begin{minipage}{10cm}
\centering  \epsfig{figure=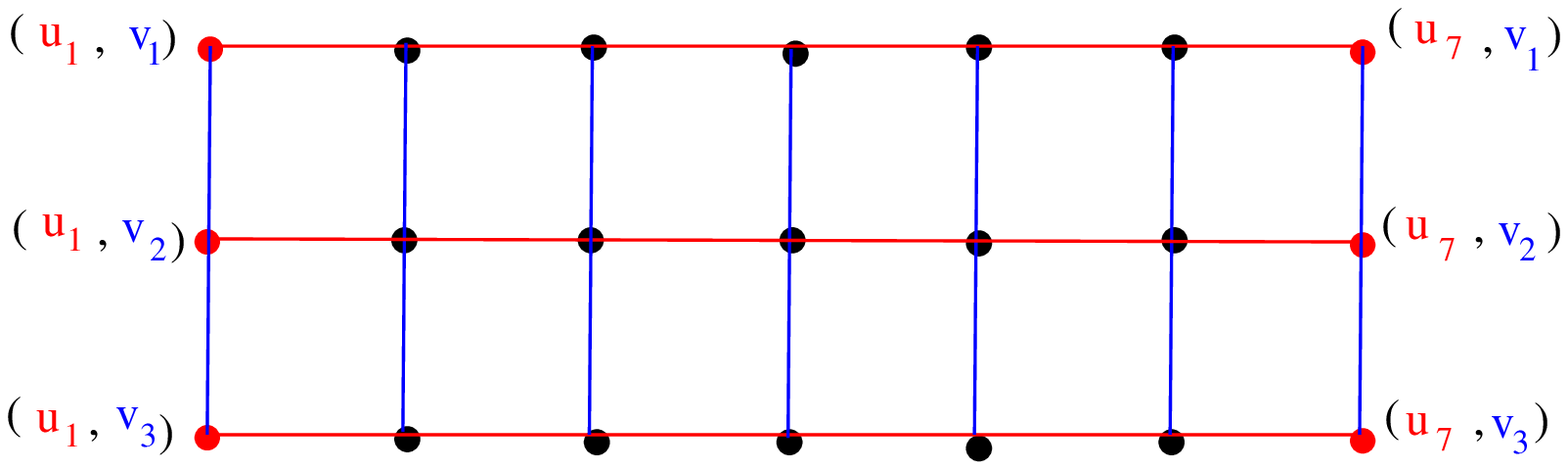, height=3.0cm}
\par $P_{7}\times P_{3}$
\end{minipage}
\end{center}
For brevity we shall use the symbol $v_{ij}$ (or simply $ij$) to represent the vertex $(u_{i},v_{j})$ of $M_{m,n}$, as you can see in the figure:
\begin{center}
\begin{minipage}{10cm}
\centering  \epsfig{figure=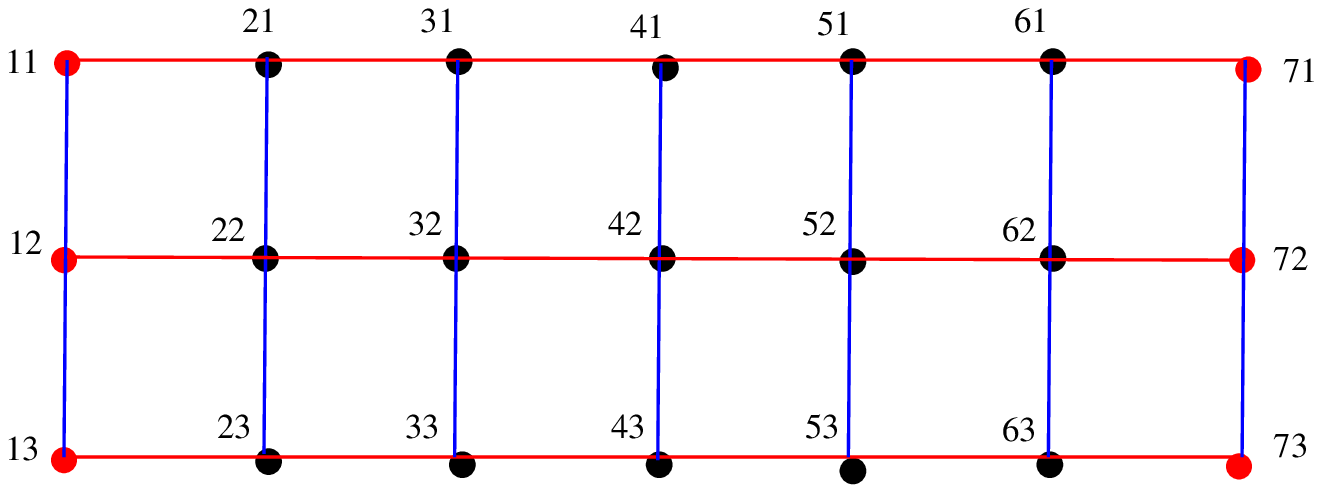, height=3.0cm}
\par $P_{7}\times P_{3}$ with complete simple labels
\end{minipage}
\end{center}
The generalized M\"{o}bius ladder obtained from $P_{7}\times P_{3}$ is:
\begin{center}
\begin{minipage}{10cm}
\centering  \epsfig{figure=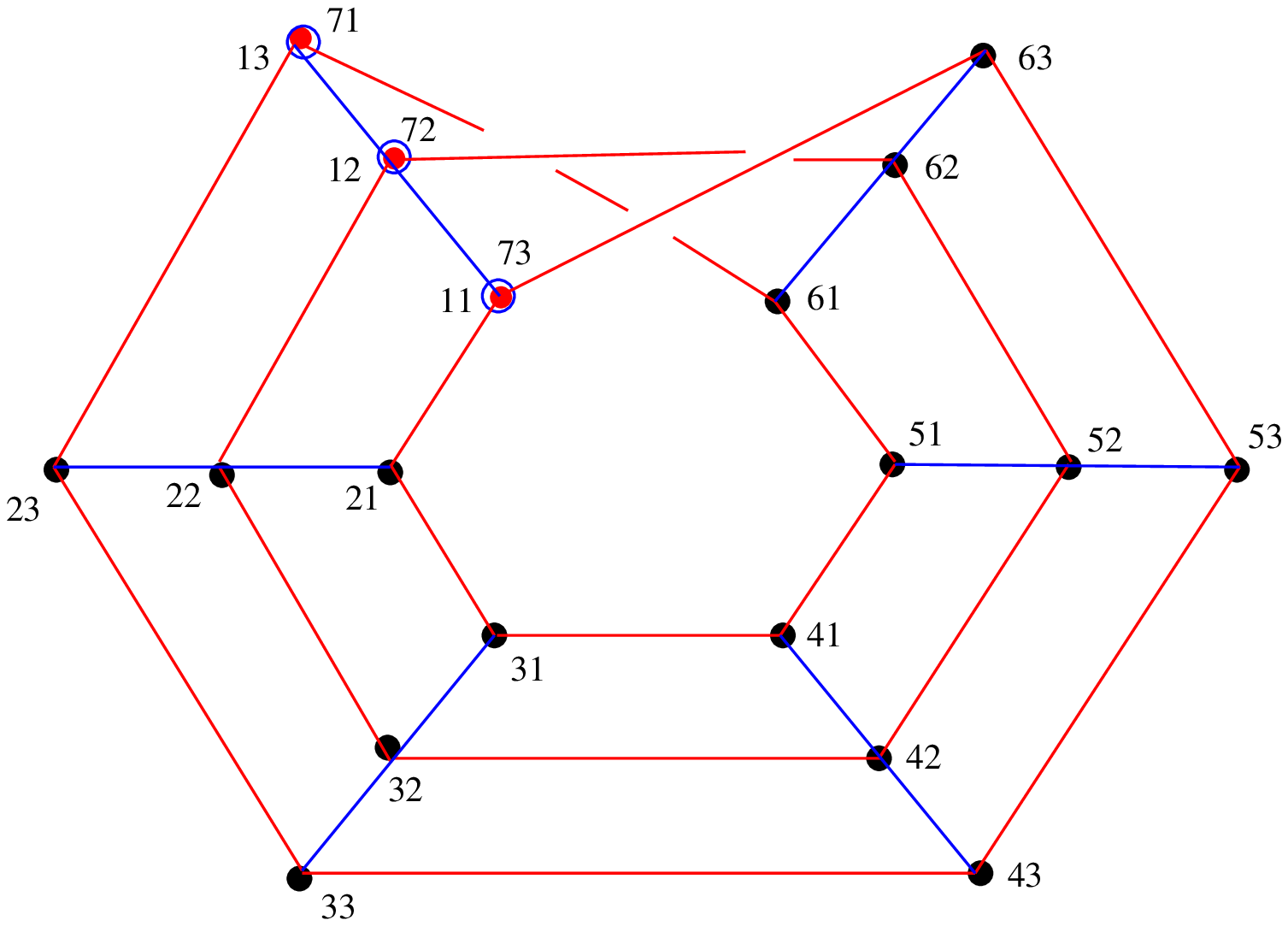, height=4.0cm}
\par $M_{7,3}$
\end{minipage}
\end{center}

\section{The Main Results}\label{sec3}
This section contains the metric dimension of the generalized M\"{o}bius ladder $M_{m,n}$. The results confirm that $M_{m,n}$ has two subfamilies with constant metric dimensions.
\begin{thm}\label{thm3.1}
The metric dimension of $M_{m,n}, m-n\geq 3,$ is 3 when one of $m$ and $n$ is even and other is odd.
\end{thm}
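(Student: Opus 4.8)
The plan is to reduce everything to an explicit distance formula and then split the proof into an upper bound $\dim(M_{m,n})\le 3$ and a lower bound $\dim(M_{m,n})\ge 3$. First I would fix coordinates as in the construction, writing each vertex as $v_{ij}$ with $1\le i\le m-1$ and $1\le j\le n$ and recording the identification $v_{1j}=v_{m,n+1-j}$. The key preliminary observation is that $M_{m,n}$ is a regular cover of the infinite grid $P_\infty\times P_n$, with deck transformation $g:(i,j)\mapsto(i+(m-1),\,n+1-j)$ acting freely by isometries. Since distances in the grid are Manhattan, the covering then gives
\[
d(v_{ij},v_{kl})\;=\;\min_{t\in\mathbb{Z}}\Big(\,|\,i-k-t(m-1)\,|+|\,j-\sigma_t(l)\,|\,\Big),
\]
where $\sigma_t(l)=l$ for $t$ even and $\sigma_t(l)=n+1-l$ for $t$ odd. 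A short comparison shows that the hypothesis $m-n\ge 3$ forces the minimum to be attained at $t\in\{-1,0,1\}$, so a geodesic either runs directly or wraps exactly once through the twist; beyond this it also keeps the resulting piecewise-linear distance free of the accidental coincidences that small ladders exhibit.

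For the upper bound I would fix a three-vertex set near one end, such as $W=\{v_{11},v_{12},v_{21}\}$, and compute $r(v_{ij}\mid W)$ for every vertex from the formula above. The two inner landmarks $v_{11},v_{21}$ act as a local coordinate frame that pins down the column index $i$ together with whether a geodesic wraps, while the third landmark $v_{12}$, used together with the mixed parity of $(m,n)$, separates the row $j$ from its mirror $n+1-j$; the role of the parity hypothesis is precisely to break the reflection symmetry $j\mapsto n+1-j$ that would otherwise leave two vertices unresolved. I would present this as a case analysis on the wrap value $t\in\{-1,0,1\}$ and on the parities of $m$ and $n$, and conclude that $v\mapsto r(v\mid W)$ is injective, so $\dim(M_{m,n})\le 3$.

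For the lower bound I would first note that $M_{m,n}$ is not a path, so $\dim\ge 2$, and then exclude $\dim=2$. The tool is the automorphism group: the column shift $\rho:(i,j)\mapsto(i+1,j)$ (wrapping with a $j$-flip) and the row reflection $\phi:(i,j)\mapsto(i,n+1-j)$ are graph automorphisms, and whenever a nontrivial automorphism $\psi$ fixes both members of a candidate basis $\{a,b\}$ we get $r(x)=r(\psi(x))$ for every moved vertex $x$. I would organize the argument by the relative position of $a$ and $b$, and in the configurations where no such fixing automorphism is available I would exhibit directly two distinct vertices equidistant from both $a$ and $b$ — typically a vertex together with a reflected or shifted partner — verifying with the formula that both coordinates agree. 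Completing this case analysis, so that a collision is certified for \emph{every} placement of the two landmarks and in both mixed-parity subcases while tracking the direct-versus-wrap dichotomy of the distance, is the step I expect to be the main obstacle, since the lower bound must rule out all $2$-sets rather than merely construct one good $3$-set.
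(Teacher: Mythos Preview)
Your covering description of $M_{m,n}$ and the resulting distance formula are correct (and the restriction to $t\in\{-1,0,1\}$ actually needs no hypothesis on $m-n$: passing from $t$ to $t+2$ leaves the vertical term unchanged and increases the horizontal one by at least $2$). The gap is in the upper bound: the set $W=\{v_{11},v_{12},v_{21}\}$ is \emph{not} resolving. Already in $M_{7,4}$ (and in $M_{6,3}$) one has $r(v_{23}\mid W)=(3,2,2)=r(v_{32}\mid W)$. The failure is structural. In the direct ($t=0$) regime the three distances are $i+j-2$, $(i-1)+\lvert j-2\rvert$ and $\lvert i-2\rvert+(j-1)$, and the swap $(i,j)\leftrightarrow(j,i)$ fixes the first coordinate and interchanges the other two only through the absolute values; diagonal pairs such as $v_{23},v_{32}$ are therefore never separated. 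So the heuristic that $v_{11},v_{21}$ ``pin down the column'' while $v_{12}$ ``breaks the row symmetry'' does not survive the computation---clustering all three landmarks in one corner cannot work. The paper instead places the landmarks at three far corners, $W=\{v_{1,1},v_{1,n},v_{m-1,1}\}$, and writes each $d(w,v_{i,q})$ as an explicit two-branch piecewise-linear function of $(i,q)$; the mixed parity of $(m,n)$ is precisely what makes the breakpoints $\tfrac12(m+n-2q+1)$ integral and keeps the two branches from overlapping. If you want to keep your covering framework, the fix is to adopt a far-corner triple of this type and rerun your case analysis.

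On the lower bound you are in fact more scrupulous than the paper: proving $\dim\ge 3$ requires excluding \emph{every} $2$-set, whereas the paper only checks the three $2$-subsets of its particular $W$ (its Possibilities I--III). Your automorphism idea is reasonable as far as it goes, but note that $\phi=\rho^{\,m-1}$, so $\mathrm{Aut}(M_{m,n})$ is essentially dihedral, generated by $\rho$ together with a single reflection $(i,j)\mapsto(c-i,j)$. A generic pair $\{a,b\}$ is then fixed by no nontrivial automorphism, and for those placements you will still have to exhibit colliding vertices directly from the distance formula. That residual case analysis is where the real work of the lower bound lies, and neither your outline nor the paper's argument carries it out.
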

\begin{proof}
We claim that the resolving set in this case is $W=\{v_{1,1},v_{1,n},v_{m-1,1}\}$. It means the distance vectors corresponding to the vertices of $W$ are distinct, and no set with less than 3 vertices serves as a resolving set. In order to find distance vectors we involve two parameters, $q$ and $i$, and depending on their different values we divide the entries of distance vectors into three steps:\\

\noindent \textbf{Step I.} [Distances of $v_{1,1}$ with all vertices of $M_{m,n}$] In this case for each value of $q\in\{1,\ldots, n\}$ the parameter $i$ varies from $1$ to $m-1$. The entries of distance vectors are
 $$ d(v_{1,1},v_{i,q})=\left\{
                  \begin{array}{ll}
                    i+q-2 & 1\leq i\leq \frac{1}{2}(m+n-2q+1)\\
                    m+n-q-i & \frac{1}{2}(m+n-2q+3)\leq i\leq m-1\\
                  \end{array}
                \right.$$

\noindent \textbf{Step II.} [Distances of $v_{1,n}$ with all vertices of $M_{m,n}$] Here, again, for each value of $q\in\{1,\ldots, n\}$ the parameter $i$ varies from $1$ to $m-1$, and we get $ d(v_{1,n},v_{i,q})=d(v_{1,1},v_{i,n+1-q})$. \\

\noindent \textbf{Step III.} [Distances of $v_{m-1,1}$ with all vertices of $M_{m,n}$] In the following we have two parts:\\
\textbf{\emph{a}}) For $q=1$ we have
$$ d(v_{m-1,1},v_{i,q})=\left\{
                  \begin{array}{ll}
                    i+n-1 & 1\leq i\leq \frac{1}{2}(m-n-1)\\
                    m-1-i & \frac{1}{2}(m-n+1), 1\leq i\leq m-1\\
                  \end{array}
                \right.$$
\textbf{\emph{b}}) For for each value of $q\in\{2,\ldots, n\}$ the parameter $i$ varies from $1$ to $m-1$, and we get
$d(v_{m-1,1},v_{i,q})=d(v_{1,1},v_{i,n+2-q})$.\\

\noindent Now we show that no set with less than three vertices is a resolving set. For this it is enough to show that the removal of a single vertex from any resolving set of three vertices does resolve the graph anymore. For easy understanding let us take $W=\{v_{1,1},v_{1,n},v_{m-1,1}\}$ as the resolving set; in this case three different possibilities arise:\\
\textbf{Possibility I.} If we take $W_{1}=\{v_{1,1},v_{1,n}\}$, then $d(v_{i,j}|_{W_{1}})=d(v_{m-i+1,n-j+1}|_{W_{1}})$; here for each value of $i\in\{2,\ldots,m-1\}$ the value of $j$ varies from $1$ to $n$.\\
\textbf{Possibility II.} If we take $W_{2}=\{v_{1,1},v_{m-1,1}\}$, then $d(v_{i,j}|_{W_{2}})=d(v_{m-i,n-j+2}|_{W_{2}})$; here for each value of $i\in\{1,\ldots,m-1\}$ the value of $j$ varies from $2$ to $n$.\\
\textbf{Possibility III.} If we take $W_{3}=\{v_{1,n},v_{m-1,1}\}$, then $d(v_{i,j}|_{W_{3}})=d(v_{i+1,j+1}|_{W_{3}})$; here $i=j$ with $1\leq i,j\leq n-1$.\\
\end{proof}

\begin{thm}\label{thm3.2}
The metric dimension of $M_{m,n}, m-n\geq 4,$ is 4 when $m$ and $n$ are both even or odd.
\end{thm}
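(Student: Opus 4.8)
The plan is to mirror the structure of the proof of Theorem~\ref{thm3.1}, adapting it to the parity case in which $m$ and $n$ share the same parity. The claim I would make is that for $m-n\geq 4$ with $m,n$ both even or both odd, a resolving set of size $4$ exists, and that no set of $3$ vertices resolves $M_{m,n}$. A natural candidate to propose is $W=\{v_{1,1},v_{1,n},v_{m-1,1},v_{m-1,n}\}$, the four ``corner-type'' vertices symmetric to those used in the odd/even case; the reason an extra vertex is forced is precisely that the same-parity twist creates a residual symmetry that the three-vertex set of Theorem~\ref{thm3.1} no longer breaks. First I would establish the distance formulas, exactly as in Steps~I--III of the previous theorem: compute $d(v_{1,1},v_{i,q})$ by the same split into an ``ascending'' branch $i+q-2$ and a ``descending'' branch governed by the twist, then obtain the distances from $v_{1,n}$, $v_{m-1,1}$, and $v_{m-1,n}$ by the reflection identities $d(v_{1,n},v_{i,q})=d(v_{1,1},v_{i,n+1-q})$ and their analogues. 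The breakpoints $\tfrac12(m+n-2q+1)$ will shift by the parity change, so I would recompute the floor/ceiling thresholds carefully for the same-parity case.

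With the distance vectors in hand, the upper bound $\dim(M_{m,n})\leq 4$ reduces to checking that the four-tuples $r(v_{i,q}\mid W)$ are pairwise distinct. I would verify this by showing that the two coordinates coming from $\{v_{1,1},v_{1,n}\}$ already pin down the ``column'' index $q$ up to the reflection $q\leftrightarrow n+1-q$, and that the remaining two coordinates from $\{v_{m-1,1},v_{m-1,n}\}$ then separate the surviving pairs by resolving the ``row'' index $i$. Because the condition $m-n\geq 4$ guarantees that the ascending and descending branches meet far enough apart, the arithmetic relations among the four entries cannot simultaneously coincide for two distinct vertices; this is the routine but bookkeeping-heavy part, and I would present it as a short case analysis on which branch each of the four distances falls into.

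The genuine content is the lower bound $\dim(M_{m,n})\geq 4$, i.e.\ that no $3$-element set resolves the graph. Here I would argue exactly in the spirit of the three Possibilities in Theorem~\ref{thm3.1}, but now the point is that in the same-parity regime the antipodal symmetry of $M_{m,n}$ is an involution with no fixed vertex, so for \emph{every} pair $\{x,y\}$ of would-be basis vertices there is a nontrivial automorphism $\sigma$ fixing both $x$ and $y$ (or exchanging them compatibly) and moving some third vertex to a metrically indistinguishable one. Concretely, I would exhibit, for each of the essentially different placements of a $3$-set, an explicit pair of distinct vertices $v_{i,j}$ and $v_{i',j'}$ with equal representations, using the reflection identities $(i,j)\mapsto(m-i+1,n-j+1)$, $(i,j)\mapsto(m-i,n-j+2)$, and the diagonal shift $(i,j)\mapsto(i+1,j+1)$ already appearing in Theorem~\ref{thm3.1}. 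The main obstacle I anticipate is organizational rather than conceptual: one must show the symmetry survives \emph{after} adding any single third vertex, so the case analysis is larger than before, and I would handle it by classifying the three vertices according to which of the two rails and which column-parity class they occupy, reducing the many configurations to a handful of symmetry types. Once it is shown that in each type some collision persists, the bound $\dim\geq 4$ follows, and combining with the upper bound completes the proof.
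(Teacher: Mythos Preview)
Your upper-bound argument is exactly the paper's: the same set $W=\{v_{1,1},v_{1,n},v_{m-1,1},v_{m-1,n}\}$, the same distance formula for $d(v_{1,1},v_{i,q})$ (with the breakpoint shifted by one to $\tfrac12(m+n-2q+2)$ in the same-parity case), and the same reflection identities $d(v_{1,n},v_{i,q})=d(v_{1,1},v_{i,n+1-q})$, $d(v_{m-1,1},v_{i,q})=d(v_{1,1},v_{i,n+2-q})$ for $q\geq 2$, plus the analogous identity for $v_{m-1,n}$. So on that half you match the paper essentially verbatim.

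For the lower bound the two approaches diverge. The paper does \emph{not} attempt to rule out every $3$-element subset of $V(M_{m,n})$; it only removes one vertex at a time from the specific set $W$ above and, for each of the four resulting triples $W_1,\dots,W_4$, writes down a single explicit collision (e.g.\ $r(v_{(m-n+2)/2,1}\mid W_1)=r(v_{(m+n)/2,n}\mid W_1)$). That is a much lighter computation than what you propose, though as written it does not by itself establish $\dim\geq 4$ in the usual sense, since a different triple not contained in $W$ could in principle succeed. Your plan to classify all $3$-sets up to symmetry and exhibit a collision in each class is the logically complete route, and it genuinely buys you the full lower bound; the price is the large case analysis you anticipate.

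One caution on your heuristic: the sentence ``the antipodal symmetry is a fixed-point-free involution, so for every pair $\{x,y\}$ there is a nontrivial automorphism fixing both $x$ and $y$'' does not follow. A fixed-point-free involution sends a generic pair to a disjoint pair, so it gives no stabilizer of $\{x,y\}$ at all; the automorphism group here is dihedral-type and most $2$-sets have trivial pointwise stabilizer. Your actual plan---explicit collision pairs built from the maps $(i,j)\mapsto(m-i+1,n-j+1)$, $(i,j)\mapsto(m-i,n-j+2)$, and $(i,j)\mapsto(i+1,j+1)$---is the right mechanism, but you should present it as direct construction rather than as a consequence of a symmetry-stabilizer argument.
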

\begin{proof}
 Here the resolving set is $W=\{v_{1,1},v_{1,n},v_{m-1,1},v_{m-1,n}\}$. It means the distance vectors corresponding to these vertices are distinct, and no set with less than 4 vertices serves as a resolving set. Here we involve two parameters, $q$ and $i$, and depending on their different values we divide the entries of distance vectors into four steps:\\

\noindent \textbf{Step I.} [Distances of $v_{1,1}$ with all vertices of $M_{m,n}$]  In this case for each value of $q\in\{1,\ldots, n\}$ the parameter $i$ varies from $1$ to $m-1$. The entries of distance vectors are
 $$ d(v_{1,1},v_{i,q})=\left\{
                  \begin{array}{ll}
                    i+q-2 & 1\leq i\leq \frac{1}{2}(m+n-2q+2)\\
                    m+n-q-i & \frac{1}{2}(m+n-2q+4) 1\leq i\leq m-1\\
                  \end{array}
                \right.$$

\noindent \textbf{Step II.} [Distances of $v_{1,n}$ with all vertices of $M_{m,n}$] Here, again, for each value of $q\in\{1,\ldots, n\}$ the parameter $i$ varies from $1$ to $m-1$, and we get $ d(v_{1,n},v_{i,q})=d(v_{1,1},v_{i,n+1-q})$. \\

\noindent \textbf{Step III.}  [Distances of $v_{m-1,1}$ with all vertices of $M_{m,n}$] In the following we have two parts:\\
\textbf{\emph{a}}) For $q=1$ we have
$$ d(v_{m-1,1},v_{i,q})=\left\{
                  \begin{array}{ll}
                    i+n-1 & 1\leq i\leq \frac{1}{2}(m-n)\\
                    m-1-i & \frac{1}{2}(m-n+2), 1\leq i\leq m-1\\
                  \end{array}
                \right.$$
\textbf{\emph{b}}) For for each value of $q\in\{2,\ldots, n\}$ the parameter $i$ varies from $1$ to $m-1$, and we get
$d(v_{m-1,1},v_{i,q})=d(v_{1,1},v_{i,n+2-q})$.\\

\noindent \textbf{Step IV.} Here for each value of $q\in\{1,\ldots, n\}$ the parameter $i$ varies from $1$ to $m-1$, and we get $ d(v_{m-1,n},v_{i,q})=d(v_{m-1,1},v_{i,n+1-q})$.\\

\noindent Now we show that no set with less than four vertices is a resolving set. For this it is enough to show that the removal of a single vertex from any resolving set of four vertices does resolve the graph anymore. For easy understanding let us take $W=\{v_{1,1},v_{1,n},v_{m-1,1},v_{m-1,n}\}$ as the resolving set. In this case four different possibilities arise:\\
\textbf{Possibility I.} If we take $W_{1}=\{v_{1,1},v_{1,n},v_{m-1,1}\}$, then $d(v_{\frac{m-n+2}{2},1}|_{W_{1}})=d(v_{\frac{m+n}{2},n}|_{W_{1}})$.\\
\textbf{Possibility II.} If we take $W_{2}=\{v_{1,n},v_{m-1,1},v_{m-1,n}\}$, then $d(v_{\frac{m-n}{2},1}|_{W_{2}})=d(v_{\frac{m+n-2}{2},n}|_{W_{2}})$.\\
\textbf{Possibility III.} If we take $W_{3}=\{v_{1,1},v_{m-1,1},v_{m-1,n}\}$, then $d(v_{\frac{m-n}{2},2}|_{W_{3}})=d(v_{\frac{m-n+2}{2},1}|_{W_{3}})$.\\
\textbf{Possibility IV.} If we take $W_{4}=\{v_{1,1},v_{1,n},v_{m-1,n}\}$, then $d(v_{\frac{m-n+2}{2},2}|_{W_{4}})=d(v_{\frac{m-n+4}{2},1}|_{W_{4}})$.\\
\end{proof}
\section{Examples}\label{sec4}
In this section two examples are presented, one related to Theorem~\ref{thm3.1} and second related to Theorem~\ref{thm3.2}.
\begin{ex}
Consider $M_{7,4}$. The resolving set in this case is $W=\{v_{1,1},v_{1,4},v_{6,1}\}$.
\begin{center}
\begin{minipage}{10cm}
\centering  \epsfig{figure=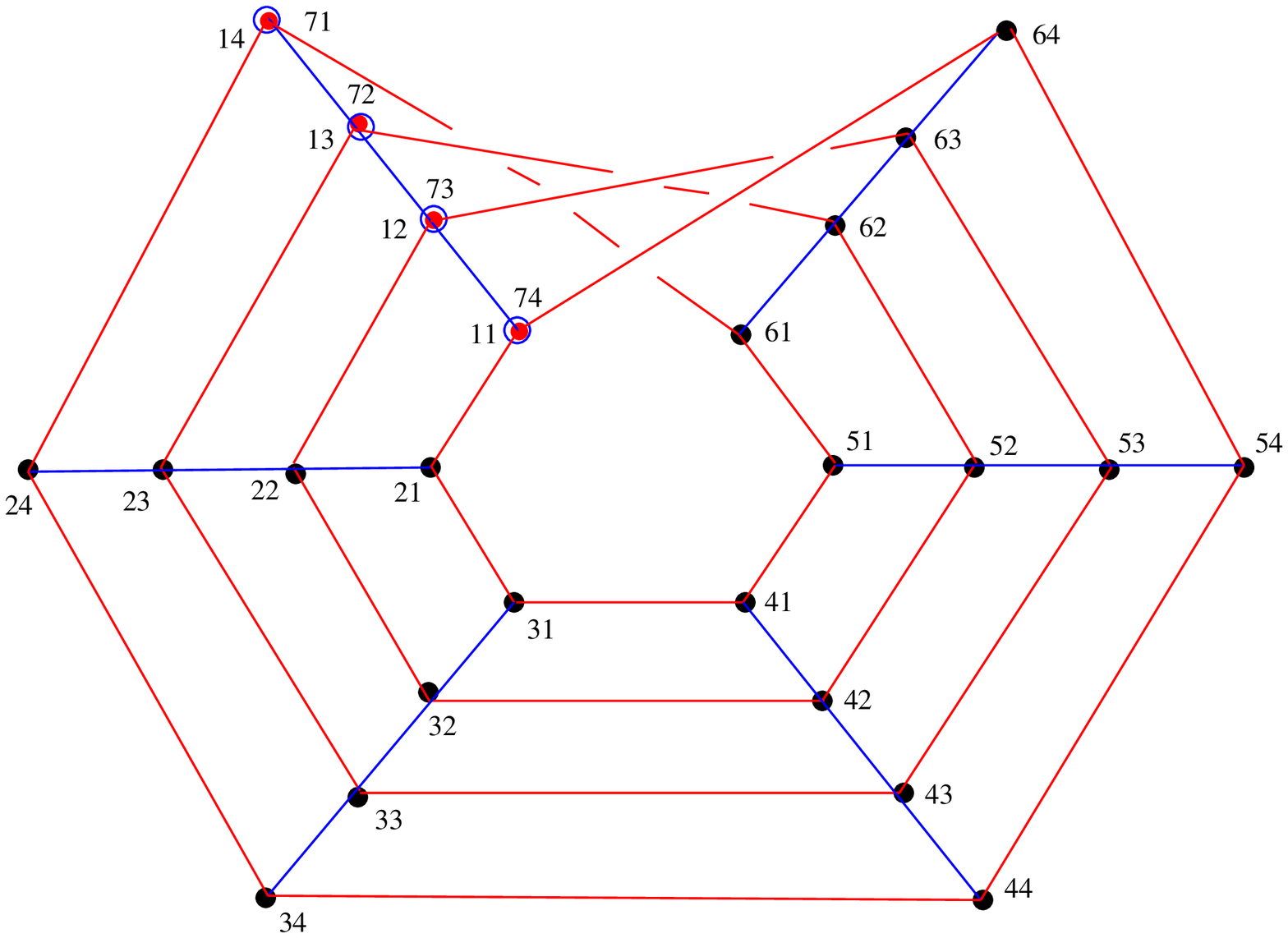, height=4.5cm}
\par $M_{7,4}$
\end{minipage}
\end{center} The full table of size $24\times 24$ representing the distances among the vertices of $M_{7,4}$ is  split up into two sub-tables, each having size $24\times 12$, and the distances corresponding to resolving vertices are given in bold form:
 \begin{center}
 \begin{tabular}{c|cccccccccccc}
 &$v_{11}$&$v_{12}$&$v_{13}$&$v_{14}$&$v_{21}$&$v_{22}$&$v_{23}$&$v_{24}$&$v_{31}$ & $v_{32}$ & $v_{33}$&$v_{34}$\\ \cline{1-13}
   $v_{11}$&\textbf{0}&\textbf{1}&\textbf{2}&\textbf{3}&\textbf{1}&\textbf{2}&\textbf{3}&\textbf{4}&\textbf{2}&\textbf{3}&\textbf{4}&\textbf{4} \\
   $v_{12}$&1&0&1&2&2&1&2&3&3&2&3&4 \\
   $v_{13}$&2&1&0&1&3&2&1&2&4&3&2&3 \\
   $v_{14}$&\textbf{3}&\textbf{2}&\textbf{1}&\textbf{0}&\textbf{4}&\textbf{3}&\textbf{2}&\textbf{1}&\textbf{5}&\textbf{4}&\textbf{3}&\textbf{2} \\
   $v_{21}$&1&2&3&4&0&1&2&3&1&2&3&4 \\
   $v_{22}$&2&1&2&3&1&0&1&2&2&1&2&3 \\
   $v_{23}$&3&2&1&2&2&1&0&1&3&2&1&2 \\
   $v_{24}$&4&3&2&1&3&2&1&0&4&3&2&1 \\
   $v_{31}$&2&3&4&4&1&2&3&4&0&1&2&3 \\
   $v_{32}$&3&2&3&4&1&2&3&4&1&0&1&2 \\
   $v_{33}$&4&3&2&3&3&2&1&2&2&1&0&1 \\
   $v_{34}$&4&4&3&2&4&3&2&1&3&2&1&0 \\
   $v_{41}$&3&4&4&3&2&3&4&4&1&2&3&4 \\
   $v_{42}$&4&3&4&4&3&2&3&4&2&1&2&3 \\
   $v_{43}$&4&3&3&4&4&3&2&3&3&2&1&2 \\
   $v_{44}$&3&4&4&3&4&4&3&2&4&3&2&1 \\
   $v_{51}$&4&4&3&2&3&4&4&3&2&3&4&4 \\
   $v_{52}$&4&3&2&3&4&3&3&4&3&2&3&4 \\
   $v_{53}$&3&2&3&4&4&3&3&4&4&3&2&3 \\
   $v_{54}$&2&3&4&4&3&4&4&3&4&4&3&2 \\
   $v_{61}$&\textbf{4}&\textbf{3}&\textbf{2}&\textbf{1}&\textbf{4}&\textbf{5}&\textbf{4}&\textbf{3}&\textbf{3}&\textbf{4}&\textbf{5}&\textbf{4} \\
   $v_{62}$&3&2&1&2&4&3&2&3&4&3&3&4 \\
   $v_{63}$&2&1&2&3&3&2&3&4&4&3&3&4 \\
   $v_{64}$&1&2&3&4&2&3&4&4&3&4&4&3 \\
      \hline
 \end{tabular}
 \end{center}
 \begin{center}
 \begin{tabular}{c|cccccccccccc}
 &$v_{41}$&$v_{42}$&$v_{43}$&$v_{44}$&$v_{51}$&$v_{52}$&$v_{53}$&$v_{54}$&$v_{61}$ & $v_{62}$ & $v_{63}$&$v_{64}$\\ \cline{1-13}
   $v_{11}$&\textbf{3}&\textbf{4}&\textbf{4}&\textbf{3}&\textbf{4}&\textbf{4}&\textbf{3}&\textbf{2}&\textbf{4}&\textbf{3}&\textbf{2}&\textbf{1} \\
   $v_{12}$&4&3&3&4&4&3&2&3&3&2&1&2 \\
   $v_{13}$&4&3&3&4&3&2&4&2&1&2&2&3 \\
   $v_{14}$&\textbf{3}&\textbf{4}&\textbf{4}&\textbf{3}&\textbf{2}&\textbf{3}&\textbf{4}&\textbf{4}&\textbf{1}&\textbf{2}&\textbf{3}&\textbf{4} \\
   $v_{21}$&2&3&4&4&3&4&4&3&4&4&3&2 \\
   $v_{22}$&3&2&3&4&4&3&3&4&4&3&2&3 \\
   $v_{23}$&4&3&2&3&4&3&3&4&3&2&3&3 \\
   $v_{24}$&4&4&3&2&3&4&4&3&2&3&4&4 \\
   $v_{31}$&1&2&3&4&2&3&4&4&3&4&4&3 \\
   $v_{32}$&2&1&2&3&3&2&3&4&4&3&4&4 \\
   $v_{33}$&3&2&1&2&4&3&2&3&4&4&3&4 \\
   $v_{34}$&4&3&2&1&4&4&3&2&3&4&4&3 \\
   $v_{41}$&0&1&2&3&1&2&3&4&2&3&4&4 \\
   $v_{42}$&1&0&1&2&2&1&2&3&3&2&3&4 \\
   $v_{43}$&2&1&0&1&3&2&1&2&4&3&2&3 \\
   $v_{44}$&3&2&1&0&4&3&2&1&4&4&3&2 \\
   $v_{51}$&1&2&3&4&0&1&2&3&1&2&3&4 \\
   $v_{52}$&2&1&2&3&1&0&1&2&2&1&2&3 \\
   $v_{53}$&3&2&1&2&2&1&0&1&3&2&1&2 \\
   $v_{54}$&4&3&2&1&3&2&1&0&4&3&2&1 \\
   $v_{61}$&\textbf{2}&\textbf{3}&\textbf{4}&\textbf{5}&\textbf{1}&\textbf{2}&\textbf{3}&\textbf{4}&\textbf{0}&\textbf{1}&\textbf{2}&\textbf{3} \\
   $v_{62}$&3&2&3&4&2&1&2&3&1&0&1&2 \\
   $v_{63}$&4&3&2&3&3&2&1&2&2&1&0&1 \\
   $v_{64}$&4&4&3&2&4&3&2&1&3&2&1&0 \\
      \hline
 \end{tabular}
 \end{center}
\end{ex}
\begin{ex}
The resolving set of $M_{10,2}$ is $W=\{v_{1,1},v_{1,2},v_{9,1},v_{9,2}\}$. Here, again, the full table of size $18\times 18$ representing the distances among the vertices of $M_{7,4}$ is  split up into two sub-tables, each having size $18\times 9$:
 \begin{center}
 \begin{tabular}{c|cccccccccccc}
 &$v_{11}$&$v_{12}$&$v_{21}$&$v_{22}$&$v_{31}$&$v_{32}$&$v_{41}$&$v_{42}$&$v_{51}$\\ \cline{1-10}
   $v_{11}$&\textbf{0}&\textbf{1}&\textbf{1}&\textbf{2}&\textbf{2}&\textbf{3}&\textbf{3}&\textbf{4}&\textbf{4} \\
   $v_{12}$&\textbf{1}&\textbf{0}&\textbf{2}&\textbf{1}&\textbf{3}&\textbf{2}&\textbf{4}&\textbf{3}&\textbf{5} \\
   $v_{21}$&1&2&0&1&1&2&2&3&3 \\
   $v_{22}$&2&1&1&0&2&1&3&2&4 \\
   $v_{31}$&2&3&1&2&0&1&1&2&2 \\
   $v_{32}$&3&2&2&1&1&0&2&1&3 \\
   $v_{41}$&3&4&2&3&1&2&0&1&1 \\
   $v_{42}$&4&3&3&2&2&1&1&0&2 \\
   $v_{51}$&4&5&3&4&2&3&1&2&0 \\
   $v_{52}$&5&4&4&3&3&2&2&1&1 \\
   $v_{61}$&5&4&4&5&3&4&2&3&1 \\
   $v_{62}$&4&5&5&4&4&3&3&2&2 \\
   $v_{71}$&4&3&5&4&4&5&3&4&2 \\
   $v_{72}$&3&4&4&5&5&4&4&3&3 \\
   $v_{81}$&3&2&4&3&5&4&4&5&3 \\
   $v_{82}$&2&3&3&4&4&5&5&4&4 \\
   $v_{91}$&\textbf{2}&\textbf{1}&\textbf{3}&\textbf{2}&\textbf{4}&\textbf{3}&\textbf{5}&\textbf{4}&\textbf{4} \\
   $v_{92}$&\textbf{1}&\textbf{2}&\textbf{2}&\textbf{3}&\textbf{3}&\textbf{4}&\textbf{4}&\textbf{5}&\textbf{5} \\
   \hline
 \end{tabular}
 \end{center}
 \begin{center}
 \begin{tabular}{c|cccccccccccc}
 &$v_{52}$&$v_{61}$&$v_{62}$&$v_{71}$&$v_{72}$&$v_{81}$&$v_{82}$&$v_{91}$&$v_{92}$\\ \cline{1-10}
   $v_{11}$&\textbf{5}&\textbf{5}&\textbf{4}&\textbf{4}&\textbf{3}&\textbf{3}&\textbf{2}&\textbf{2}&\textbf{1} \\
   $v_{12}$&\textbf{4}&\textbf{4}&\textbf{5}&\textbf{3}&\textbf{4}&\textbf{2}&\textbf{3}&\textbf{1}&\textbf{2} \\
   $v_{21}$&4&4&5&5&4&4&3&3&2 \\
   $v_{22}$&3&5&4&4&5&3&4&2&3 \\
   $v_{31}$&3&3&4&4&5&5&4&4&3 \\
   $v_{32}$&2&4&3&5&4&4&5&3&4 \\
   $v_{41}$&2&2&3&3&4&4&5&5&4 \\
   $v_{42}$&1&3&2&4&3&5&4&4&5 \\
   $v_{51}$&1&1&2&2&3&3&4&4&5 \\
   $v_{52}$&0&2&1&3&2&4&3&5&4 \\
   $v_{61}$&2&0&1&1&2&2&3&3&4 \\
   $v_{62}$&1&1&0&2&1&3&2&4&3 \\
   $v_{71}$&3&1&2&0&1&1&2&2&3 \\
   $v_{72}$&2&2&1&1&0&2&1&3&2 \\
   $v_{81}$&4&2&3&1&2&0&1&1&2 \\
   $v_{82}$&3&3&2&2&1&1&0&2&1 \\
   $v_{91}$&\textbf{5}&\textbf{3}&\textbf{4}&\textbf{2}&\textbf{3}&\textbf{1}&\textbf{2}&\textbf{0}&\textbf{1} \\
   $v_{92}$&\textbf{4}&\textbf{4}&\textbf{3}&\textbf{3}&\textbf{2}&\textbf{2}&\textbf{1}&\textbf{1}&\textbf{0} \\
   \hline
 \end{tabular}
 \end{center}
\end{ex}
\section{Conclusion}\label{sec5}
In this article we introduced generalized M\"{o}bius Ladder $M_{m,n}$ and proved that it has two subfamilies, each having a constant metric dimension. The metric dimension of $M_{m,n}$ is 3 when either $m$ is odd and $n$ is even or when $m$ is even and $n$ is odd; the metric dimension of $M_{m,n}$ is 4 when $m$ and $n$ are both odd  or when are both even. It is remarkable that the present results cover the results about the M\"{o}bius Ladder $M_{m}$ already presented in \cite{murtaza:12} and \cite{Mobeen-nizami:17} as subcases.

\end{document}